\documentclass[12pt]{amsart}

\usepackage{setspace}
\usepackage{amsmath, amssymb,amsthm, amsfonts, latexsym}

\newtheorem{theorem}{Theorem}
\newtheorem{lemma}{Lemma}
\newtheorem{prop}{Proposition}

\theoremstyle{remark}
\newtheorem{remark}{Remark}

\newcommand{\tr}{\mbox{tr}}

\renewcommand{\div}{\mbox{div}}

\newcommand{\R}{\mathbb R}

\newcommand{\be}{\begin{equation}}
\newcommand{\ee}{\end{equation}}
\newcommand{\bee}{\begin{equation*}}
\newcommand{\eee}{\end{equation*}}

\def\p{\partial}

\def\la{\langle}
\def\ra{\rangle}
\def\lf{\left}
\def\ri{\right}
\def\Pi{\displaystyle{\mathbb{II}}}

\def\bnabla{\overline{\nabla}}

\begin{document}

\title{Monotone quantities  involving a weighted $\sigma_k$ integral  along inverse curvature flows}

\author{Kwok-Kun Kwong}
\address[Kwok-Kun Kwong]{Department of Mathematics, University of Miami, Coral Gables, FL 33146, USA.}
 \email{kwong@@math.miami.edu}

\author{Pengzi Miao$^1$}
\address[Pengzi Miao]{Department of Mathematics, University of Miami, Coral Gables, FL 33146, USA.}
\email{pengzim@math.miami.edu}

\thanks{$^1$Research partially supported by Simons Foundation Collaboration Grant for Mathematicians \#281105.}

\keywords{$\sigma_k$ integral, inverse curvature flow}

\renewcommand{\subjclassname}{
  \textup{2010} Mathematics Subject Classification}
\subjclass[2010]{53C44;  53A07}

\begin{abstract}
We give a family of monotone  quantities along smooth solutions to
the inverse curvature flows in Euclidean spaces.
We also derive a related  geometric  inequality
for  closed hypersurfaces with positive $k$-th mean curvature.
\end{abstract}

 \maketitle

 \markboth{Kwok-Kun Kwong and Pengzi Miao} {Monotone quantities  involving a weighted $\sigma_k$ integral}

\section{Introduction}
The aim of this paper is to introduce some  monotone quantities involving a weighted $ \sigma_k $ integral along inverse curvature flows in the Euclidean space $ \R^{m+1}$.

Given a  smooth closed hypersurface $ \Sigma \subset \R^{m+1} $,  let   $ \{ \kappa_1, \cdots, \kappa_m \}$  be
 the principal curvatures of $\Sigma$. For any $ 1 \le k \le m$,
define  the $k$-th mean curvature $H_k$ and the normalized $k$-th mean curvature $\sigma_k$ of $\Sigma$ by
$$ H_k = \sum_{ 1 \le i_1 < i_2 \ldots < i_k \le m }   \kappa_{i_1} \cdots \kappa_{i_k}  \ \ \
\textrm{and} \ \ \
\sigma_k= \frac{H_k}{{m\choose k}} . $$
respectively  (the notation $\sigma_k$ here follows that of  Reilly in \cite{reilly1973variational, reilly1977}).
When $ k = 0$,  define $H_0 = \sigma_0 = 1$.
A family of closed hypersurfaces $\{ \Sigma_t \}_{t \in I} $, given by a smooth map
$$ X: \Sigma \times I \longrightarrow \R^{m+1} $$
where $I$ is an open interval and  $ \Sigma_t = X ( \Sigma, t) $,
 is said to evolve according to an inverse curvature flow if
\be \label{eq-icf}
\frac{\p X}{\p t} = \frac{ \sigma_{k-1}}{\sigma_k} \nu
\ee
for some $ 1 \le k \le m $. Here $ \nu $ is the unit outward normal to $ \Sigma_t$ and $ \sigma_{k-1}, \sigma_{k}$ are computed on  $ \Sigma_t$.

Our main result is

\begin{theorem} \label{thm-monotone-1}
Suppose $ \{ \Sigma_t \}_{t \in I}$ is a smooth solution to an inverse curvature flow \eqref{eq-icf} for some $ 2 \le k \le m$.
Given any point $O \in \R^{m+1}$, let $ r $ be the Euclidean distance to $ O$. Then the function
\be \label{eq-qt-kge2-intro}
Q_k( \Sigma_t) = \lf( \int_{\Sigma_t} \sigma_{k-1}  d \mu \ri)^{ - \frac{ m -k}{m+ 1 -k} }  \lf(    \int_{\Sigma_t}  \sigma_k r^2 d \mu  -  \int_{\Sigma_t} \sigma_{k-2} d \mu \ri)
\ee
is monotone decreasing   and $Q_k(t)$ is a constant function if and only if $ \Sigma_t $ is a round sphere  for each $t$.
Here  $ d \mu $ is the volume form on $\Sigma_t$.
\end{theorem}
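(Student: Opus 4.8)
The plan is to differentiate $Q_k$ along the flow and reduce the monotonicity to two instances of Newton's inequality together with the Hsiung--Minkowski formulas. Place $O$ at the origin and write $u=\la X,\nu\ra$ for the support function, $r=|X|$, $h$ for the second fundamental form, $H_1$ for the mean curvature, $T_j$ for the $j$-th Newton tensor of $h$ (which is divergence free in $\R^{m+1}$), and $f=\sigma_{k-1}/\sigma_k$ for the flow speed. First I would collect, under a normal variation $\p_t X=f\nu$, the elementary facts $\p_t(d\mu)=fH_1\,d\mu$ and $\p_t(r^2)=2fu$, together with the standard first--variation formula for $\p_t\sigma_j$ expressed in terms of $\la T_{j-1},\Hess f\ra$ and $f\,\tr(T_{j-1}h^2)$. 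Combining these with $\div T_{j-1}=0$, the identities $\tr T_{j-1}=(m-j+1)H_{j-1}$, $\tr(T_{j-1}h)=jH_j$, $\tr(T_{j-1}h^2)=H_1H_j-(j+1)H_{j+1}$, the Hessian identity $\Hess(\tfrac12 r^2)=g-u\,h$ along $\Sigma_t$, and integration by parts on the closed manifold $\Sigma_t$, I expect to obtain
\[
\frac{d}{dt}\int_{\Sigma_t}\sigma_j\,d\mu=(m-j)\int_{\Sigma_t}f\sigma_{j+1}\,d\mu,\qquad
\frac{d}{dt}\int_{\Sigma_t}\sigma_j\,r^2\,d\mu=\int_{\Sigma_t}f\big[(m-j)\sigma_{j+1}r^2+2(j+1)\sigma_j u-2j\sigma_{j-1}\big]d\mu .
\]

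Next I would substitute $f=\sigma_{k-1}/\sigma_k$. Taking $j=k-1$ in the first identity gives $\frac{d}{dt}\int_{\Sigma_t}\sigma_{k-1}\,d\mu=(m+1-k)\int_{\Sigma_t}\sigma_{k-1}\,d\mu$, so $A(t):=\int_{\Sigma_t}\sigma_{k-1}\,d\mu$ grows like $e^{(m+1-k)t}$ and $A^{-\frac{m-k}{m+1-k}}$ equals a positive constant times $e^{-(m-k)t}$. Setting $B(t):=\int_{\Sigma_t}\sigma_k r^2\,d\mu-\int_{\Sigma_t}\sigma_{k-2}\,d\mu$, so that $Q_k(\Sigma_t)$ is a fixed positive multiple of $e^{-(m-k)t}B(t)$, the identities with $j=k$ and $j=k-2$ should give, after simplification,
\[
B'(t)=(m-k)\int_{\Sigma_t}\frac{\sigma_{k-1}\sigma_{k+1}}{\sigma_k}r^2\,d\mu+2(k+1)\int_{\Sigma_t}\sigma_{k-1}u\,d\mu-(m+k+2)\int_{\Sigma_t}\frac{\sigma_{k-1}^2}{\sigma_k}\,d\mu .
\]
Invoking the Hsiung--Minkowski formula $\int_{\Sigma_t}\sigma_{k-2}\,d\mu=\int_{\Sigma_t}\sigma_{k-1}u\,d\mu$ (the divergence theorem applied to $T_{k-2}(X^\top)$, valid for any closed hypersurface) to rewrite $(m-k)B(t)$, I then expect
\[
B'(t)-(m-k)B(t)=(m-k)\int_{\Sigma_t}\Big(\frac{\sigma_{k-1}\sigma_{k+1}}{\sigma_k}-\sigma_k\Big)r^2\,d\mu+(m+k+2)\Big(\int_{\Sigma_t}\sigma_{k-1}u\,d\mu-\int_{\Sigma_t}\frac{\sigma_{k-1}^2}{\sigma_k}\,d\mu\Big).
\]

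Finally I would argue both terms on the right are nonpositive. The first is $\le 0$ since $r^2\ge 0$ and Newton's inequality $\sigma_{k-1}\sigma_{k+1}\le\sigma_k^2$ holds pointwise for arbitrary real principal curvatures; when $k=m$ the term disappears because its coefficient $m-k$ vanishes. For the second, Newton's inequality $\sigma_{k-2}\sigma_k\le\sigma_{k-1}^2$ together with $\sigma_k>0$ gives the pointwise bound $\sigma_{k-2}\le\sigma_{k-1}^2/\sigma_k$, and integrating and using Hsiung--Minkowski once more yields $\int_{\Sigma_t}\sigma_{k-1}u\,d\mu=\int_{\Sigma_t}\sigma_{k-2}\,d\mu\le\int_{\Sigma_t}\sigma_{k-1}^2/\sigma_k\,d\mu$. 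Hence $B'(t)\le(m-k)B(t)$, i.e. $\frac{d}{dt}\big(e^{-(m-k)t}B(t)\big)\le 0$, so $Q_k$ is nonincreasing. For the equality characterization: if $Q_k$ is constant then equality holds in the displayed identity for every $t$, forcing equality in both Newton inequalities almost everywhere --- hence everywhere, by smoothness --- on $\Sigma_t$; since equality in Newton's inequality (with the relevant $\sigma$'s positive) forces all principal curvatures equal, each $\Sigma_t$ is totally umbilic and therefore a round sphere. Conversely, a one-parameter family of round spheres solving \eqref{eq-icf} stays concentric with radius $\rho_0e^t$, and a direct computation gives $Q_k(\Sigma_t)=d^{\,2}\,\omega_m^{1/(m+1-k)}$, where $d$ is the (time-independent) distance from $O$ to the common center and $\omega_m=|\mathbb S^m|$, which is independent of $t$. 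The step I expect to be most delicate is the bookkeeping in the variation computation that collapses $\frac{d}{dt}\int_{\Sigma_t}\sigma_j r^2\,d\mu$ to exactly the combination above; once that identity is secured, the two Newton inequalities and the Minkowski formulas close the argument.
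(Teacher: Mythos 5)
Your proposal is correct and follows essentially the same route as the paper: the same evolution identities for $\int_{\Sigma_t}\sigma_j\,d\mu$ and $\int_{\Sigma_t}\sigma_j r^2\,d\mu$ (via the Newton tensors, $\div T_{j-1}=0$, and the Hessian of $\tfrac12 r^2$), specialized to $F=\sigma_{k-1}/\sigma_k$, then the two Newton inequalities together with the Hsiung--Minkowski formula to obtain $\frac{d}{dt}\bigl(e^{-(m-k)t}B(t)\bigr)\le 0$, combined with the exact growth $\bigl(\int_{\Sigma_t}\sigma_{k-1}\,d\mu\bigr)'=(m+1-k)\int_{\Sigma_t}\sigma_{k-1}\,d\mu$. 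The only differences are cosmetic bookkeeping (you group terms into $B'-(m-k)B$ instead of estimating the $l=k$ and $l=k-2$ evolutions separately, which requires the implicit positivity $\sigma_k>0$ exactly as the paper does) and your explicit sphere computation $Q_k=d^2\,\omega_m^{1/(m+1-k)}$, which verifies the converse direction of the rigidity statement that the paper leaves implicit.
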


Here are some remarks concerning Theorem \ref{thm-monotone-1}.

\begin{remark}
The long time existence of smooth solutions to \eqref{eq-icf} was
established by  Gerhardt in \cite{gerhardt1990flow} and by Urbas in \cite{urbas1990expansion}
when the initial surface  $\Sigma$ is star-shaped with $\sigma_k > 0$.
Moreover,  they proved that the rescaled
hypersurfaces  $ \{ \widetilde{\Sigma_t } \}$, parametrized by $ \widetilde{X}(\cdot , t) =
  e^{- t } X( \cdot, t)   $, converge to a sphere  in the $ C^\infty$ topology as $ t \rightarrow \infty$.
\end{remark}

\begin{remark}
Theorem \ref{thm-monotone-1} does not address the case $k = 1$, i.e. when the flow is the inverse mean curvature flow.
In that case,  if one defines $\sigma_{-1} = \la Y, \nu \ra $ where $Y$ is a position vector field in $\R^{m+1}$,
 it was proved in \cite{KwongMiao} that
$$ {Q}_1( \Sigma_t ) = | \Sigma_t |^{ - \frac{ m -1}{m} }  \lf[ \frac{1}{m}  \int_{\Sigma_t}  H r^2 d \mu  - (m+1) \mathrm{Vol} (\Omega_t)   \ri] $$
is  monotone decreasing along the inverse mean curvature flow  $\frac{\p X}{\p t } = \frac{1}{H} \nu $.
Here $ |\Sigma_t|$ is the area of $\Sigma_t$,
$ \mathrm{Vol}(\Omega_t)$ is the volume of the region $ \Omega_t$ enclosed by $ \Sigma_t$ and
$H = \sum_{i=1}^m  \kappa_i $ is the mean curvature of $ \Sigma_t$.
Results  in  \cite{KwongMiao} were motivated by  the work of  Brendle, Hung and Wang in  \cite{Brendle-Hung-Wang}.
\end{remark}

In \cite{Guan-Li}, Guan and Li  proved that
\be \label{eq-monotone-GL}
  \lf( \int_{\Sigma_t} \sigma_{k-1} d \mu \ri)^{ - \frac{ m -  k}{ m + 1 -  k  } } \lf( \int_{\Sigma_t} \sigma_k d \mu \ri)
\ee
is monotone decreasing along the inverse curvature flow \eqref{eq-icf}. Using this together with the result of Gerhardt and Urbas, 
Guan and Li derived the quermassintegral inequalities 
\bee
  \lf( \frac{1}{\omega_m} {\int_{\Sigma} \sigma_{k-1} d \mu }\ri)^{ \frac{ 1}{ m + 1 - k } }  \le  \lf( \frac{1}{\omega_m}  {\int_{\Sigma} \sigma_k d \mu }\ri)^\frac{1}{m-k}
\eee
for any  star-shaped $\Sigma$ with $ \sigma_k > 0$. Here $\omega_m$ is the volume of the $m$-dimensional unit sphere in $\mathbb{R}^{m+1}$. 
Unlike the quantity in \eqref{eq-monotone-GL},   $Q_k(\Sigma_t)$ in \eqref{eq-qt-kge2-intro} is not scaling invariant (it has a unit of length square when scaled with respect to $O$). However, it is still interesting to know
if  $Q_k(\Sigma_t)  $ always has a fixed sign.
We answer this question in the next theorem.

\begin{theorem} \label{thm-ineq-weighted-intro}
Let $ \Sigma \subset \R^{m+1} $ be a smooth closed hypersurface. Suppose
 $\sigma_k > 0 $ on $\Sigma$  for some $ 1 \le k \le m $.
Given any point $O \in \R^{m+1}$,  let  $r$ be the Euclidean distance to $O$.
Then
\be \label{eq-ineq-weighted-intro}
\int_{\Sigma} \sigma_{l} r^p \ d \mu  \le  \int_{\Sigma}  \sigma_k   r^{ p + k - l}  d \mu
\ee
for any integer $ 0 \le l < k $ and any real number $p \ge 0$. Moreover,
the equality holds  if and only if $ \Sigma $ is a round sphere centered at $O$.
\end{theorem}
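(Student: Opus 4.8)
The plan is to deduce \eqref{eq-ineq-weighted-intro} from a single ``weighted Minkowski'' inequality that trades the index $l$ for $l+1$ at the cost of one extra power of $r$, apply it $k-l$ times, and then read off the equality case from the last application. It suffices to treat $\Sigma$ connected, since for the general inequality one may sum over connected components.

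First I would dispose of the hypothesis. Since $\Sigma$ is closed, at a point of $\Sigma$ farthest from $O$ all principal curvatures are positive, so the curvature vector of $\Sigma$ lies in the G\aa rding cone $\Gamma_m$ there; being connected with $\sigma_k>0$, the curvature vector then stays in the component of $\{\sigma_k>0\}$ containing $\Gamma_m$, namely $\Gamma_k$. Hence $\sigma_1,\dots,\sigma_k>0$ on $\Sigma$ and, for each $1\le j\le k$, the $(j-1)$-st Newton transformation $T_{j-1}$ of the shape operator is positive definite (standard facts; cf.\ Reilly \cite{reilly1977}), with $\tr T_{j-1}=(m-j+1)H_{j-1}$ and $\tr(T_{j-1}W)=jH_j$, $W$ the Weingarten operator.

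Next, the core estimate. Set $Y=X-O$, $u=\la Y,\nu\ra$, $Y^T=Y-u\nu$, so that $r=|Y|$, $\nabla^\Sigma(r^q)=q\,r^{q-2}Y^T$ and $\nabla^\Sigma_i Y^T_j=g_{ij}-u\,h_{ij}$, $h$ the second fundamental form. Because the ambient metric is flat, $\div_\Sigma T_{j-1}=0$; therefore, for $q\ge0$ and $1\le j\le k$,
\[
\div_\Sigma\!\left(r^q\,T_{j-1}Y^T\right)=c_j\,r^q\bigl(\sigma_{j-1}-u\,\sigma_j\bigr)+q\,r^{q-2}\la Y^T,T_{j-1}Y^T\ra ,
\]
where $c_j=\dfrac{m!}{(j-1)!\,(m-j)!}>0$ is the common value of $(m-j+1)\binom{m}{j-1}$ and $j\binom{m}{j}$ (so that $\tr T_{j-1}=c_j\sigma_{j-1}$, $\tr(T_{j-1}W)=c_j\sigma_j$). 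Integrating over the closed manifold $\Sigma$ annihilates the left-hand side, giving
\[
c_j\int_\Sigma r^q\sigma_{j-1}\,d\mu=c_j\int_\Sigma u\,r^q\sigma_j\,d\mu-q\int_\Sigma r^{q-2}\la Y^T,T_{j-1}Y^T\ra\,d\mu .
\]
Since $q\ge0$ and $T_{j-1}>0$ the last term is nonnegative, and since $u\le|Y|=r$ and $\sigma_j>0$ we obtain the one-step inequality
\[
\int_\Sigma r^q\sigma_{j-1}\,d\mu\le\int_\Sigma r^{q+1}\sigma_j\,d\mu ,\qquad q\ge0,\ \ 1\le j\le k .
\]
(If $O\in\Sigma$, one first notes the elementary bounds $r^{q-2}|Y^T|^2\le Cr^q$ and $|\nabla^\Sigma(r^q)|\le q\,r^{q-1}$, which are integrable on $\Sigma$, so the divergence theorem still applies; alternatively prove the estimate for $O\notin\Sigma$ and let $O\to\Sigma$.)

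Finally, applying this one-step inequality successively with $(j,q)=(l+1,p),(l+2,p+1),\dots,(k,p+k-l-1)$---all the exponents $q$ being $\ge0$---chains into
\[
\int_\Sigma\sigma_l\,r^p\,d\mu\le\int_\Sigma\sigma_{l+1}\,r^{p+1}\,d\mu\le\cdots\le\int_\Sigma\sigma_k\,r^{p+k-l}\,d\mu ,
\]
which is \eqref{eq-ineq-weighted-intro}. If equality holds, then the step $j=k$ is an equality, which (as $\sigma_k>0$ and $u\le r$) forces $u=r$ on $\Sigma$; by Cauchy--Schwarz this means $Y^T\equiv0$, hence $\nabla^\Sigma(r^2)=2Y^T\equiv0$ and $r$ is constant on the connected $\Sigma$, i.e.\ $\Sigma$ is a round sphere centred at $O$. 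The converse is immediate: on the sphere of radius $a$ about $O$ both sides of \eqref{eq-ineq-weighted-intro} equal $a^{\,p-l}\,|\Sigma|$. I expect the only genuine subtlety to be the reduction step---verifying that $\sigma_k>0$ on a \emph{closed} hypersurface already places the curvature in $\Gamma_k$, so that the relevant Newton transformations are positive definite---which is exactly where closedness of $\Sigma$ enters; the divergence identity is a routine computation, and the iteration together with the equality analysis are bookkeeping.
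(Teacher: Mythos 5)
Your argument is essentially the paper's own proof: the divergence identity for $r^q\,T_{j-1}Y^T$ is exactly the weighted Hsiung--Minkowski formula the paper derives from $\div T_{l}=0$, the positivity of the Newton transformations comes from the same G\aa rding-cone/Barbosa--Colares fact, and the iteration, the equality analysis via $\la Y,\nu\ra=r$ (equivalently $Y^{\parallel}=0$), and the excision/limiting treatment of $O\in\Sigma$ all match the paper's steps. The only cosmetic difference is your aside about summing over connected components, which is unnecessary here (the paper, like you in the equality case, works with $\Sigma$ connected) and would need care about the choice of normal if taken literally; the core proof is the same.
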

Let $l = k-2$ and $p=0$,  \eqref{eq-ineq-weighted-intro} becomes
\bee
\int_{\Sigma} \sigma_{k-2}  \ d \mu  \le  \int_{\Sigma}  \sigma_k   r^{2}  d \mu .
\eee
Therefore,  Theorem \ref{thm-ineq-weighted-intro} implies that $Q_k (t) $ is always nonnegative
 along the inverse curvature flow \eqref{eq-icf}.

This  paper is organized as follows. In Section \ref{sec-preliminary}, we
review some basics facts about $\sigma_k$. In Section \ref{sec-proof-thm-1},
we derive the monotonicity of $Q_k(\Sigma_t)$. In Section \ref{sec-proof-thm-2},
we prove the inequality \eqref{eq-ineq-weighted-intro}.

\section{Notations and preliminaries}  \label{sec-preliminary}
Given  a  smooth closed hypersurface $ \Sigma \subset  \R^{m+1}$,  let
$\overline \nabla $ and $\nabla $ denote  the connections on $\R^{m+1}$ and $\Sigma$ respectively.
Let $ \nu $ be the  outward unit normal   to $\Sigma$.
The shape operator of $ \Sigma$ with respect to $\nu$ is defined  by
 $$A (\cdot) =\overline \nabla_{(\cdot)}  \nu: T\Sigma \to T\Sigma ,$$
 where $ T \Sigma$ is the tangent bundle of $ \Sigma$.
 Given any local frame $\{ e_i\}_{i=1}^m $ on $ \Sigma$, define $\{ A_i^j \}$ by
 $ A (e_i) = A_i^j e_j $, where $ i, j  \in \{ 1, \ldots, m \}$ and Einstein summation convention is applied.
 Recall that  $ H_0 = \sigma_0 = 1 $ and
 \bee
 H_k = \sum_{ 1 \le i_1 < i_2 \ldots < i_k \le m }   \kappa_{i_1} \cdots \kappa_{i_k}  \ \ \
\textrm{and} \ \ \
\sigma_k= \frac{H_k}{{m\choose k}}
 \eee
 for $ 1 \le k \le m $,   where $ \{ \kappa_i \}_{i=1}^m $ are the eigenvalues of $ A$.
 In terms of $ \{ A_i^j\}$,  $H_k$ can be computed  by
\be \label{eq-formula-Hk}
\begin{split}
H_k = & \ \frac 1{k!}\sum_{\substack{1\le i_1,\cdots, i_k\le m\\
1\le j_1, \cdots, j_k\le m}} \delta_{j_1\cdots j_k}^{i_1\cdots i_k}A_{i_1}^{j_1}\cdots A_{i_k}^{j_k}  ,
\end{split}
\ee
where   $ \delta_{j_1\cdots j_k}^{i_1\cdots i_k} = 0 $ if   $i_p =i_q $ or $j_p =j_q$ for some $ p \ne q$, or if
 the two sets  $\lbrace i_1, \cdots, i_k\rbrace \ne \lbrace j_1, \cdots, j_k\rbrace$;
 otherwise $ \delta_{j_1\cdots j_k}^{i_1\cdots i_k} $
 is defined as the sign of the permutation $(i_1, \cdots, i_k)\mapsto (j_1, \cdots, j_k)$.
 Because of \eqref{eq-formula-Hk}, we define  $H_k = 0 $ for any $ k > m $.

A basic tool in the study of  $ H_k $  is  the $(k-1)$-th
Newton transformation $T_{k-1}: T\Sigma \rightarrow T \Sigma$ (cf. \cite{reilly1973variational, reilly1973hessian}).
If we write
$ T_{k-1} ( e_j ) = ( T_{k-1} )_j^i e_i $,
then $\{ (T_{k-1} )_j^i \} $ are given by
 \bee
  {(T_{k-1})}_j^{\,i}= \frac 1 {(k-1)!}
\sum_{\substack{1 \le i_1,\cdots, i_{k-1} \le m\\ 1\le j_1, \cdots, j_{k-1} \le m}}
\delta^{i  i_1 \ldots  i_{k-1} }_{j  j_1 \ldots  j_{k-1} }
A_{i_1}^{j_1}\cdots A_{i_{k-1}}^{j_{k-1}}.
\eee
A more geometric way to understand  $T_{k-1}$ is that
if $  \{ e_i \}_{i=1}^m $ consist of  eigenvectors of $A$ with
$ A (e_j) = \kappa_j e_j$, then $ T_{k-1} (e_j) =  \Lambda_j e_j  $,
where
\be \label{eq-Lambda-i}
 \Lambda_j =  \sum_{ \substack{ 1 \le i_1 < \cdots < i_{k-1} \le m, \\
j \notin \{ i_1, \cdots, i_{k-1 } \} } } \kappa_{i_1} \cdots \kappa_{i_{k-1}}  .
\ee
When $k=1$, one defines $T_0 = \mathrm{Id}$, the identity map.
It follows  from \eqref{eq-Lambda-i} that
\be \label{eq-trace-T}
\tr ( T_{k-1} ) =  [ m - (k-1) ] H_{k-1} ,
\ee
\be \label{eq-trace-TA}
\tr (T_{k-1} \circ A) = k H_k
\ee
and
\be \label{eq-trace-TA2}
\tr ( T_{k-1} \circ A \circ A) =  H_k H_1 - (k+1) H_{k+1}.
\ee
Another useful property of $T_{k-1}$ is that it is divergence free (cf. \cite{reilly1973hessian}),
\be  \label{eq-divergence-free}
\div \,T_{k-1} = 0 .
\ee
Here $ \tr (\cdot) $ and $\div (\cdot)$  denote the trace and the divergence taken on $\Sigma$ respectively.

Suppose $ \{\Sigma_t \}$ is a family of  evolving hypersurfaces
given by  a smooth map  $ X: \Sigma \times I \rightarrow \R^{m+1} $ with
\be \label{eq-F-flow}
\frac{\p X}{\p t} = F \nu
\ee
where $ \nu $ is the outward unit normal to $ \Sigma_t = X ( \Sigma, t)$ and $F$ denotes the speed of the flow
which may depend on $X$, the principal curvatures of $\Sigma_t$ and time $t$.
The following evolution equation of  $ H_k $ is standard (see, e.g.,  \cite[Proposition 4]{Guan-Li} or \cite[Lemma A]{reilly1973variational}),
\be \label{eq-flow-Hk}
\begin{split}
{H_k}' = & \  \tr ( T_{k-1} \circ A' ) \\
= & \   ( T_{k-1})_{j}^i
\lf[ -  (\nabla^2 F)_{i}^j  - F (A\circ A)^j_i \ri].
\end{split}
\ee
Here `` $ ' $ " denotes the derivative taken with respect to $t$.

\begin{prop} \label{prop-flow-Hf}
Let $ f$ be an arbitrary function on $ \R^{m+1}$.
Along the flow \eqref{eq-F-flow},
\be \label{eq-flow-Hf}
\begin{split}
\lf( \int_{\Sigma_t} H_l f d \mu \ri)'
= & \ \int_{\Sigma_t} \{  -    (T_{l-1} )_j^i    ( \bnabla^2 f )_i^j   + (l+1) H_l  \la \bnabla f, \nu \ra
 \\
& \ \ \ \ \ \    + (l+1)H_{l+1} f \} F d \mu
\end{split}
\ee
for any $ 1 \le l \le m$.
\end{prop}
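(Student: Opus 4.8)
The plan is to differentiate $\int_{\Sigma_t} H_l f\, d\mu$ under the flow \eqref{eq-F-flow} by combining three ingredients: the evolution of the volume form, the evolution of $H_l$ from \eqref{eq-flow-Hk}, and the fact that $f$ is a fixed function on $\R^{m+1}$ whose restriction to $\Sigma_t$ changes only through the normal motion of the surface. First I would recall the standard fact $(d\mu)' = H_1 F\, d\mu = H F\, d\mu$ (the first variation of area under normal speed $F$), and the fact that along a curve moving with velocity $F\nu$ one has $f' = \la \bnabla f, \nu\ra F$. Together with \eqref{eq-flow-Hk} this gives
\begin{equation*}
\lf( \int_{\Sigma_t} H_l f\, d\mu \ri)' = \int_{\Sigma_t} \lf\{ (T_{l-1})_j^i\lf[ -(\nabla^2 F)_i^j - F(A\circ A)_i^j \ri] f + H_l \la \bnabla f, \nu\ra F + H_l f\, H_1 F \ri\} d\mu.
\end{equation*}

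Next I would simplify the term $-(T_{l-1})_j^i (\nabla^2 F)_i^j\, f$ by integrating by parts on $\Sigma_t$. Since $T_{l-1}$ is divergence free by \eqref{eq-divergence-free}, we have $\div(f\, T_{l-1}(\nabla F)) = (T_{l-1})_j^i (\nabla^2 F)_i^j\, f + \la \nabla f, T_{l-1}(\nabla F)\ra$, so integrating eliminates the Hessian of $F$ at the cost of producing $\la \nabla f, T_{l-1}(\nabla F)\ra$; a second integration by parts (again using $\div T_{l-1} = 0$) rewrites this as $-(T_{l-1})_j^i(\nabla^2 f)_i^j\, F$, where now the Hessian falls on $f$. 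This is the step I expect to require the most care: one must be consistent about whether $\nabla^2 f$ means the ambient Hessian $\bnabla^2 f$ or the intrinsic one, and the difference involves the second fundamental form, $\nabla^2 f|_{\Sigma} = \bnabla^2 f|_{T\Sigma} + \la \bnabla f,\nu\ra A$ (or its appropriate tensorial form). Tracing against $T_{l-1}$ and using $\tr(T_{l-1}\circ A) = l H_l$ from \eqref{eq-trace-TA} converts the extra term into $l H_l \la \bnabla f,\nu\ra F$, which combines with the existing $H_l\la\bnabla f,\nu\ra F$ to give the coefficient $(l+1)$ in \eqref{eq-flow-Hf}.

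Finally I would collect the curvature terms. The contribution $-(T_{l-1})_j^i(A\circ A)_i^j\, f\, F$ together with $H_l H_1 f\, F$ is exactly $\lf[H_l H_1 - \tr(T_{l-1}\circ A\circ A)\ri] f F$, and by \eqref{eq-trace-TA2} this equals $(l+1)H_{l+1} f F$. Assembling the three pieces — the $-(T_{l-1})_j^i(\bnabla^2 f)_i^j F$ term from the integration by parts, the $(l+1)H_l\la\bnabla f,\nu\ra F$ term from the Hessian correction, and the $(l+1)H_{l+1}fF$ term from the curvature algebra — yields precisely \eqref{eq-flow-Hf}. The main obstacle is bookkeeping: keeping the Gauss-equation correction term, the sign conventions for $A$ and $\nu$, and the two integration-by-parts steps mutually consistent; no single step is deep, but the identities \eqref{eq-trace-TA}, \eqref{eq-trace-TA2} and \eqref{eq-divergence-free} must be deployed in exactly the right places.
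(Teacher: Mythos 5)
Your proposal is correct and follows essentially the same route as the paper's proof: differentiate using the first variation of area and \eqref{eq-flow-Hk}, move the Hessian from $F$ to $f$ by integrating by parts twice with \eqref{eq-divergence-free}, pass from the intrinsic to the ambient Hessian via \eqref{eq-two-Hessians}, and use \eqref{eq-trace-TA} and \eqref{eq-trace-TA2} to obtain the $(l+1)$ coefficients. The only nit is the sign in the Hessian relation you quote: with the paper's conventions it reads $(\nabla^2 f)_{ij} = (\bnabla^2 f)_{ij} - \la \bnabla f, \nu \ra A_{ij}$, which is precisely what produces the $+\, l H_l \la \bnabla f, \nu \ra$ correction you then use.
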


\begin{proof}
Direct calculation gives
\be \label{eq-Direct-comp}
\begin{split}
\lf( \int_{\Sigma_t} H_l f d \mu \ri)' = & \
\int_{\Sigma_k} \lf( H_l' f + H_l \la \bnabla f, F \nu \ra + H_l f F H_1 \ri) d \mu \\
= & \ \int_{\Sigma_t} \lf\{  (T_{l-1})_j^i [ - (\nabla^2 F )_{i}^j - F ( A \circ A)_i^j ]  f  \ri. \\
& \ \ \ \ \ \ \ \lf. + H_l F  \la \bnabla f, \nu \ra + H_l H_1 F f \ri\} d \mu  \\
= & \ \int_{\Sigma_t} \lf\{  -(T_{l-1})_j^i   (\nabla^2 f )_{i}^j   +   (l+1)H_{l+1} f   \ri. \\
& \ \ \ \ \ \ \ \lf. + H_l   \la \bnabla f, \nu \ra \ri\} F d \mu  \\
\end{split}
\ee
where we used  \eqref{eq-flow-Hk}, \eqref{eq-divergence-free} and  \eqref{eq-trace-TA2}.

Note that the two  Hessians $ (\bnabla^2 f )_{ij} $  and $ (\nabla^2 f )_{ij} $  are related by
\be \label{eq-two-Hessians}
(\bnabla^2 f)_{ij}  = (\nabla^2 f)_{ij} + \frac{\p f}{\p \nu} A_{ij} ,
\ee
where $ A_{ij} = g_{il}A^l_j $ is the second fundamental form of $ \Sigma_t$ and $g$ denotes the
induced metric on $\Sigma_t$.

Therefore, it follows from \eqref{eq-Direct-comp} and \eqref{eq-two-Hessians} that
\bee
\begin{split}
\lf( \int_{\Sigma_t} H_l f d \mu \ri)'
= & \ \int_{\Sigma_t} \{  -    (T_{l-1} )_j^i    [  ( \bnabla^2 f )_i^j -    \la \bnabla f, \nu \ra  A_i^j  ]  \\
& \  \ \ \ \ \     + (l+1)H_{l+1} f  + H_l  \la \bnabla f, \nu \ra  \} F d \mu  \\
= & \ \int_{\Sigma_t} \{  -    (T_{l-1} )_j^i    ( \bnabla^2 f )_i^j   + (l+1) H_l  \la \bnabla f, \nu \ra  \\
& \ \ \ \ \ \    + (l+1)H_{l+1} f \} F d \mu  ,
\end{split}
\eee
where we also  used \eqref{eq-trace-TA}.
\end{proof}

We end this section by noting the following fact regarding  $\sigma_k  > 0$.

\begin{lemma} \label{lma-sigma-k}
For a closed hypersurface $ \Sigma $ in $\R^{m+1}$,
the condition $ \sigma_k > 0 $ implies $ \sigma_l > 0 , \ \forall \ 1 \le l \le k . $
\end{lemma}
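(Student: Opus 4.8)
The plan is to deduce the statement from G\r{a}rding's theory of the cones $\Gamma_k$, together with the existence of an elliptic point on any closed hypersurface. Recall that the $k$-th G\r{a}rding cone is $\Gamma_k := \{ \lambda \in \R^m : \sigma_1(\lambda) > 0, \dots, \sigma_k(\lambda) > 0 \}$, where $\sigma_j$ is viewed as a symmetric function of $\lambda \in \R^m$, and that, by G\r{a}rding's theorem, $\Gamma_k$ is precisely the connected component of the open set $\{ \lambda \in \R^m : \sigma_k(\lambda) > 0 \}$ containing the positive cone $\{ \lambda_1 > 0, \dots, \lambda_m > 0 \}$. Since any connected component of an open subset of $\R^m$ is both open and closed in that subset, $\Gamma_k$ is in particular relatively closed in $\{ \sigma_k > 0 \}$. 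Granting this, the lemma reduces to showing that the principal curvature vector $\kappa(p) = (\kappa_1(p), \dots, \kappa_m(p))$ of $\Sigma$ lies in $\Gamma_k$ at every point $p \in \Sigma$.

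First I would produce a single point where this holds. Since it suffices to treat each connected component of $\Sigma$ separately, assume $\Sigma$ is connected. Fixing an auxiliary point $O' \in \R^{m+1}$, let $R = \max_{x \in \Sigma} |x - O'|$ and choose $p_0 \in \Sigma$ attaining this maximum. Then $\Sigma$ lies in the closed ball $\overline{B_R(O')}$ and is internally tangent to $\partial B_R(O')$ at $p_0$, so comparing second fundamental forms at $p_0$ gives $\kappa_i(p_0) \ge R^{-1} > 0$ for every $i$. Hence $\kappa(p_0)$ lies in the positive cone, and so $\kappa(p_0) \in \Gamma_k$.

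Next I would propagate this by a connectedness argument. Consider the set $U = \{ p \in \Sigma : \sigma_1(p) > 0, \dots, \sigma_k(p) > 0 \}$. Since $\sigma_1, \dots, \sigma_k$ are smooth functions on $\Sigma$, the set $U$ is open, and it is nonempty by the previous paragraph. It is also closed in $\Sigma$: if $p_n \to p$ in $\Sigma$ with $p_n \in U$, then $\kappa(p_n) \in \Gamma_k$, hence, by continuity of the principal curvatures, $\kappa(p)$ lies in the closure of $\Gamma_k$; since $\sigma_k(p) > 0$ by hypothesis and $\Gamma_k$ is relatively closed in $\{ \sigma_k > 0 \}$, it follows that $\kappa(p) \in \Gamma_k$, i.e.\ $p \in U$. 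As $\Sigma$ is connected and $U$ is nonempty, open and closed, $U = \Sigma$, which is precisely the assertion that $\sigma_l > 0$ on $\Sigma$ for all $1 \le l \le k$.

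The only genuinely nontrivial ingredient is the structural fact that $\Gamma_k$ coincides with $\bigcap_{j=1}^{k} \{ \sigma_j > 0 \}$ and equals a connected component of $\{ \sigma_k > 0 \}$; this is G\r{a}rding's theorem on hyperbolic polynomials, which I would simply quote. The comparison argument producing an elliptic point, and the open--closed topological argument, are both standard, and I would not expect either to cause difficulty.
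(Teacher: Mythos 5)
Your proposal is correct and follows essentially the same route as the paper: quoting the G\aa rding cone characterization of $\{\sigma_1>0,\dots,\sigma_k>0\}$ as the connected component of $\{\sigma_k>0\}$ containing the positive cone, producing an elliptic point on the closed hypersurface, and then invoking connectedness. The only difference is that you spell out the standard details (the farthest-point comparison and the open--closed argument) that the paper leaves implicit with its citations.
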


This follows from  the characterization of
the Garding's cone
$$ \Gamma_k^+ = \{ \kappa \in \R^m \ | \ \sigma_{l} (\kappa) > 0 , \ \forall \ 1 \le  l \le k \} $$
as  the connected component of the set
$ \{ \kappa \in \R^m \ | \ \sigma_k (\kappa) > 0 \} $ containing the point $ (1, 1, \cdots, 1)$
(cf. \cite[Proposition 2.6]{Huisken-Sinestrari} or  \cite[Proposition 3.2]{Barbosa-Colares})
and  the fact that  $ \Sigma$ always has a point
 at which $ \kappa_i > 0 $, $\forall  \ 1 \le i \le m$.

\section{Derivation of monotone quantities}  \label{sec-proof-thm-1}

Given any point $O \in \R^{m+1}$, let $ r$ be the Euclidean distance to $O$.
Consider the function
$  f = \frac12 r^2 , $
which satisfies
\begin{itemize}
\item $ \bnabla^2 f = g $, where $ g$ is the Euclidean metric on $ \R^{m+1}$
\item  $  \bnabla f  =  Y $,
where $ Y $ is the position vector starting from $O$.
\end{itemize}
With such a choice of $f$, it follows from Proposition \ref{prop-flow-Hf}, \eqref{eq-trace-T} and the definition
$ \sigma_l = H_l / {m \choose l} $ that
\be \label{eq-flow-sigma-l}
\begin{split}
\lf( \int_{\Sigma_t}  \sigma_l f d \mu \ri)'
 = & \ \int_{\Sigma_t} \lf[  -   {l}  \sigma_{l-1}    + (l+1)  \sigma_l  \la Y, \nu \ra
  +  ( m - l )\sigma_{l+1} f \ri] F d \mu
\end{split}
\ee
 along the flow  \eqref{eq-F-flow},  $\forall \  1 \le l \le m $.

\begin{proof}[Proof of Theorem \ref{thm-monotone-1}]
Suppose the flow speed $F$ in \eqref{eq-F-flow}  is given by
$$ F = \frac{\sigma_{k-1}}{\sigma_k}  $$
for some $ 2 \le k \le m $.
Then \eqref{eq-flow-sigma-l} gives
\be \label{eq-icf-flow-sigma-l}
\begin{split}
\lf( \int_{\Sigma_t}  \sigma_l f d \mu \ri)'
 = & \ \int_{\Sigma_t} \lf[  -   {l}  \sigma_{l-1}    + (l+1)  \sigma_l  \la Y, \nu \ra
  +  ( m - l )\sigma_{l+1} f \ri] \frac{\sigma_{k-1}}{\sigma_k}   d \mu  .
\end{split}
\ee
Choose $l=k $ in \eqref{eq-icf-flow-sigma-l}, we have
\be \label{ineq-icf-flow-sigma-l=k}
\begin{split}
\lf( \int_{\Sigma_t}  \sigma_k f d \mu \ri)'
 = & \ \int_{\Sigma_t} \lf[  -   {k}  \frac{\sigma_{k-1}^2}{\sigma_k}    + (k+1)   \sigma_{k-1} \la Y, \nu \ra
  +  ( m - k )\sigma_{k+1} \frac{\sigma_{k-1}}{\sigma_k}  f \ri]   d \mu  \\
   \le  & \ \int_{\Sigma_t} \lf[  -   {k}  \sigma_{k-2}    + (k+1)   \sigma_{k-2}
  +  ( m - k )  {\sigma_k}  f \ri]   d \mu  \\
   =  & \ \int_{\Sigma_t}   \sigma_{k-2} \ d \mu +  (m-k) \int_{\Sigma_t} {\sigma_k}  f  d \mu  ,
\end{split}
\ee
where we  used the fact $ \sigma_k > 0 $,
 the Newton inequality
\bee
\sigma_{i-1} \sigma_{i+1} \le \sigma_i^2
\eee
for any $ 1 \le i \le m-1$,
and the Hsiung-Minkowski formula (\cite{hsiung1956some})
\bee
\int_\Sigma \sigma_{j-1} d \mu = \int_{\Sigma} \sigma_{j} \la Y, \nu \ra d \mu
\eee
for any $ 1 \le j \le m$.

We  need information on $ \lf( \int_{\Sigma_t} \sigma_{k-2} d \mu \ri)' $.
 Setting $ \displaystyle F = \frac{\sigma_{k-1}}{\sigma_k} $ and $f=1$ in Proposition \ref{prop-flow-Hf} gives
 \be \label{eq-icf-flow-H}
\lf( \int_{\Sigma_t} \sigma_l \  d \mu \ri)'
=  (m-l) \int_{\Sigma_t}  \sigma_{l+1}   \frac{\sigma_{k-1}}{\sigma_k}    d \mu .
\ee
Let $ l = k -2 $ in \eqref{eq-icf-flow-H}, we have
 \be \label{ineq-icf-flow-H-k-2}
 \begin{split}
\lf( \int_{\Sigma_t} \sigma_{k-2} \  d \mu \ri)'
= & \  [m-(k-2)] \int_{\Sigma_t}  \sigma_{k-1}   \frac{\sigma_{k-1}}{\sigma_k}    d \mu \\
\ge & \  [m-(k-2)] \int_{\Sigma_t}  {\sigma_{k-2}}   \ d \mu
\end{split}
\ee
where we again used the Newton inequality and the assumption $ k \ge 2 $.

Now it follows from \eqref{ineq-icf-flow-sigma-l=k} and \eqref{ineq-icf-flow-H-k-2} that
\bee
\begin{split}
 \lf[ \int_{\Sigma_t}  \lf( \sigma_k r^2   - \sigma_{k-2} \ri) d \mu \ri]' \leq
  (m-k) \lf[ \int_{\Sigma_t}  \lf( \sigma_k r^2   - \sigma_{k-2} \ri) d \mu \ri] ,
\end{split}
\eee
which then implies
\be \label{ineq-et-k-k-2}
\lf[ e^{- (m-k) t} \int_{\Sigma_t}  \lf( \sigma_k r^2   - \sigma_{k-2} \ri) d \mu \ \ri]' \le 0 .
\ee
On the other hand, setting $l=k-1$ in \eqref{eq-icf-flow-H} gives
\be \label{eq-k-1}
\lf( \int_{\Sigma_t} \sigma_{k-1}  \  d \mu \ri)'
=  [m- (k-1)] \int_{\Sigma_t}  \sigma_{k-1}   d \mu .
\ee
By \eqref{ineq-et-k-k-2} and \eqref{eq-k-1}, we conclude that
$$ \lf[ \lf( \int_{\Sigma_t} \sigma_{k-1}  \  d \mu \ri)^{ - \frac{ m-k}{ m - (k-1)} }  \int_{\Sigma_t}  \lf( \sigma_k r^2   - \sigma_{k-2} \ri) d \mu \ \ri]' \le 0 . $$
If the derivative is zero at some time $t_0$, then
$$ \kappa_1 = \cdots = \kappa_m  $$
at $t = t_0$ by the equality case in the Newton inequality, which  implies that $ \Sigma_{t_0}$ is a round sphere.
This completes the proof of Theorem \ref{thm-monotone-1}.
\end{proof}

\begin{remark}
Let $l = k $ in \eqref{eq-icf-flow-H} and apply the Newton inequality, one has
 \be \label{eq-icf-flow-H-k}
 \begin{split}
\lf( \int_{\Sigma_t} \sigma_k \  d \mu \ri)'
\le & \  (m-k) \int_{\Sigma_t} {\sigma_k}    d \mu .
\end{split}
\ee
\eqref{eq-icf-flow-H-k} and  \eqref{eq-k-1} imply  the quantity in \eqref{eq-monotone-GL}
is monotone decreasing, which is the monotonicity of Guan and Li (\cite{Guan-Li}).
\end{remark}

\begin{remark}
In \eqref{ineq-icf-flow-H-k-2},  if $k=1$, we do not have a point-wise inequality of the form
 $ \frac{1}{\sigma_1} = \frac{m}{H} \ge \sigma_{-1} = \la Y, \nu \ra $.
In this case,  an analogue of \eqref{ineq-icf-flow-H-k-2} in \cite{KwongMiao} was
 \bee
 \begin{split}
 \mathrm{Vol} (\Omega_t)'
\ge & \  (m+1) \mathrm{Vol} (\Omega_t)
\end{split}
\eee
which was derived using an inequality of Ros (\cite{ros1987compact})
\bee
 m \int_{\Sigma} \frac{1}{H} d \mu \ge (m+1) \mathrm{Vol} (\Omega) .
\eee
\end{remark}

\section{A related inequality} \label{sec-proof-thm-2}
In this section, we  prove Theorem \ref{thm-ineq-weighted-intro}.
First, we need  a generalized Hsiung-Minkowski formula (cf. \cite[Theorem 2.1]{kwong2012}).
Similar formulas of this type can  be found in \cite{Alencar-Colares, Chen-Yano, reilly1973variational, WS}.

\begin{prop} \label{prop-generalized-HM}
Let $\Sigma$ be a smooth closed hypersurface in $\mathbb{R}^{m+1}$ and
$f$  be a smooth function on $\Sigma$.
Then
  \be \label{eq-generalized-HM}
  \begin{split}
    \int_\Sigma   f \sigma_{l} d\mu = & \ \int_\Sigma f\sigma_{l+1}  \la Y, \nu \ra  d\mu \\
    & \ - \frac{1}{ ( m - l )  {m \choose l  } }\int_\Sigma \langle T_{l}(\nabla f), Y^\parallel \rangle d\mu ,
  \end{split}
  \ee
  for any $ 0 \le l < m $.
    Here $Y$ is  a position vector field, $Y^\parallel $ denotes its tangential component  on $\Sigma$,
    and $\nu$ is the unit outward normal to $\Sigma$.
\end{prop}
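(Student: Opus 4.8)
\textbf{Proof proposal for Proposition \ref{prop-generalized-HM}.}

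The plan is to deduce \eqref{eq-generalized-HM} from a single divergence identity on $\Sigma$, integrated via the divergence theorem. First I would recall that the position vector field $Y$ (based at some fixed origin) decomposes along $\Sigma$ as $Y = Y^\parallel + \la Y, \nu\ra \nu$, and that its covariant derivative satisfies $\bnabla_{e_i} Y = e_i$; projecting this onto $T\Sigma$ gives the well-known formula $\nabla_{e_i} Y^\parallel = e_i - \la Y, \nu\ra A(e_i)$, equivalently $\nabla_i (Y^\parallel)^j = \delta_i^j - \la Y, \nu\ra A_i^j$. This is the geometric input that makes the standard ($f\equiv 1$) Hsiung--Minkowski formula work, and it is what I will feed into the Newton tensor machinery.

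The key computation is to expand $\div\lf( f\, T_l(Y^\parallel) \ri)$, where $T_l$ is the $l$-th Newton transformation. Writing $T_l(Y^\parallel) = (T_l)^i_j (Y^\parallel)^j e_i$ and using that $T_l$ is divergence free (equation \eqref{eq-divergence-free}, with index shifted from $k-1$ to $l$), one gets
\be \label{eq-div-expansion-proposal}
\div\lf( f\, T_l(Y^\parallel) \ri) = \la T_l(\nabla f), Y^\parallel\ra + f\, (T_l)^i_j \nabla_i (Y^\parallel)^j = \la T_l(\nabla f), Y^\parallel\ra + f\, \tr(T_l) - f\, \la Y, \nu\ra\, \tr(T_l\circ A),
\ee
where in the last step I substituted $\nabla_i (Y^\parallel)^j = \delta_i^j - \la Y, \nu\ra A_i^j$. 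Now apply the trace identities \eqref{eq-trace-T} and \eqref{eq-trace-TA} (again with $k-1$ replaced by $l$), namely $\tr(T_l) = (m-l)H_l$ and $\tr(T_l\circ A) = (l+1)H_{l+1}$, so the right-hand side of \eqref{eq-div-expansion-proposal} becomes $\la T_l(\nabla f), Y^\parallel\ra + (m-l) H_l f - (l+1) H_{l+1} f\, \la Y, \nu\ra$. Integrating over the closed hypersurface $\Sigma$, the left-hand side vanishes by the divergence theorem, yielding
\bee
(m-l)\int_\Sigma H_l f\, d\mu = (l+1)\int_\Sigma H_{l+1} f\, \la Y, \nu\ra\, d\mu - \int_\Sigma \la T_l(\nabla f), Y^\parallel\ra\, d\mu .
\eee
Finally I would convert $H_l, H_{l+1}$ to $\sigma_l, \sigma_{l+1}$ using $H_l = {m\choose l}\sigma_l$ and the elementary identity $(l+1){m\choose l+1} = (m-l){m\choose l}$, then divide both sides by $(m-l){m\choose l}$ to obtain exactly \eqref{eq-generalized-HM}; the coefficient of the $\la Y,\nu\ra$ term collapses to $1$ and the coefficient of the gradient term becomes $\frac{1}{(m-l){m\choose l}}$ as claimed. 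The hypothesis $0\le l < m$ is exactly what keeps $m-l$ nonzero so the division is legitimate, and no curvature-positivity assumption is needed since this is a pure integral identity.

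I do not anticipate a serious obstacle here: the only place requiring care is bookkeeping the binomial constants so the stated normalization comes out precisely, and making sure the Newton-tensor trace identities are applied with the correctly shifted index (the excerpt states them for $T_{k-1}$, and here the relevant object is $T_l$, i.e.\ one uses \eqref{eq-trace-T} and \eqref{eq-trace-TA} with $k-1 \rightsquigarrow l$). The divergence-free property \eqref{eq-divergence-free} and the formula $\nabla Y^\parallel = \mathrm{Id} - \la Y,\nu\ra A$ are the two structural facts doing all the work.
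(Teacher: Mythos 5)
Your proposal is correct and coincides with the paper's own argument: the paper likewise expands $\div ( f\, T_{l} ( Y^\parallel ) )$ using $(\nabla Y^\parallel)^j_i = \delta^j_i - A^j_i \la Y, \nu \ra$, the divergence-free and self-adjoint properties of $T_l$, and the trace identities \eqref{eq-trace-T}, \eqref{eq-trace-TA}, then integrates over the closed hypersurface and normalizes by $(m-l){m \choose l}$. No differences worth noting beyond the bookkeeping you already flagged.
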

\begin{proof}
$ Y$ being  a  position vector field  implies
 \be \label{eq-nabla-Y-parallel}
 \begin{split}
 ( \nabla Y^\parallel )^j_i =  ( \bnabla Y)^j_i - A^j_i \la Y, \nu \ra  = \delta^j_i - A^j_i \la Y, \nu \ra .
 \end{split}
 \ee
Therefore, on $ \Sigma$
\be \label{eq-div-1}
\begin{split}
\div (  f T_{l} ( Y^\parallel )  = & \  \la \nabla f , T_{l} ( Y^\parallel) \ra + f    (T_{l})_j^i ( \nabla Y^\parallel )^j_i   \\
  = & \ \la  T_{l} (\nabla f ), Y^\parallel  \ra   + f    [  \tr (T_{l}) -  \tr ( T_{l} \circ A ) \la Y, \nu \ra  ]  ,
\end{split}
\ee
where we  used  \eqref{eq-divergence-free}, \eqref{eq-nabla-Y-parallel} and  the fact
$ T_{l} $ is self-adjoint.
It follows from \eqref{eq-trace-T}, \eqref{eq-trace-TA} and \eqref{eq-div-1} that
\be \label{eq-div-2}
\begin{split}
\div (  f T_{l} ( Y^\parallel )
  = & \ \la  T_{l} (\nabla f ), Y^\parallel  \ra   + f    [  ( m - l ) H_{l}  -   (l+1) H_{l+1}  \la Y, \nu \ra  ]  .
\end{split}
\ee
Integrating \eqref{eq-div-2} over $\Sigma$ gives
\eqref{eq-generalized-HM}.
\end{proof}

Next, we need a result concerning the positivity of the Newton transformation $T_{l}$ in \cite[Proposition 3.2]{Barbosa-Colares}.

\begin{prop}[\cite{Barbosa-Colares}] \label{prop-BC}
For a closed hypersurface $ \Sigma \subset \R^{m+1} $, if  $\sigma_k > 0 $ for some $ 1 \le k \le m$, then
the quadratic form associated to $ T_{l} $ is positive definite for any $ 0 \le l < k$.
\end{prop}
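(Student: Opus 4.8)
The plan is to reduce the statement to the positivity of every eigenvalue of $T_l$ at every point of $\Sigma$, and then to obtain that positivity from the structure of the Garding cone. Fix $p \in \Sigma$ and a local orthonormal frame $\{e_i\}$ at $p$ diagonalising the shape operator, $A(e_j) = \kappa_j e_j$. By the analogue of \eqref{eq-Lambda-i} for $T_l$ one has $T_l(e_j) = \Lambda_j e_j$ with
\[
\Lambda_j \;=\; \sum_{\substack{1 \le i_1 < \cdots < i_l \le m\\ j \notin \{i_1,\ldots,i_l\}}} \kappa_{i_1}\cdots\kappa_{i_l} \;=\; H_l\big(\kappa_1,\ldots,\widehat{\kappa_j},\ldots,\kappa_m\big),
\]
i.e. the $l$-th mean curvature of the $(m-1)$-tuple obtained from $(\kappa_1,\ldots,\kappa_m)$ by deleting $\kappa_j$. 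In particular $T_l$ is self-adjoint with respect to the induced metric, so its associated quadratic form is positive definite if and only if $\Lambda_j > 0$ for all $j$. For $l = 0$ one has $\Lambda_j \equiv 1$, so from now on assume $1 \le l < k$; the task becomes to show $H_l(\kappa|j) > 0$ for each $j$, where $\kappa|j := (\kappa_1,\ldots,\widehat{\kappa_j},\ldots,\kappa_m)$.

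The second step is to upgrade the hypothesis $\sigma_k(p) > 0$ to the stronger statement that $\kappa(p) := (\kappa_1,\ldots,\kappa_m)$ lies in the Garding cone $\Gamma_k^+$. This is exactly the reasoning behind Lemma \ref{lma-sigma-k}: $\Sigma$ being closed has an elliptic point (for instance a point of $\Sigma$ farthest from $O$) at which all $\kappa_i > 0$, so the curvature vector lies in $\Gamma_k^+$ there; and since $\sigma_k > 0$ on the connected set $\Sigma$, the curvature vector can never leave the connected component of $\{\sigma_k > 0\}$ containing $(1,\ldots,1)$, which is $\Gamma_k^+$. In particular $\sigma_1(p),\ldots,\sigma_k(p) > 0$.

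The remaining ingredient, which I expect to be the main obstacle, is the restriction property of the Garding cone: if $\kappa \in \Gamma_k^+(\R^m)$ then for every $j$ the truncated vector $\kappa|j$ lies in $\Gamma_{k-1}^+(\R^{m-1})$. Granting this, and since $l \le k-1$, we obtain $\sigma_l(\kappa|j) > 0$, hence $H_l(\kappa|j) = \binom{m-1}{l}\sigma_l(\kappa|j) > 0$, so every $\Lambda_j > 0$ and the proof is finished. The restriction property is equivalent, via the identity $H_{l'-1}(\kappa|j) = \partial H_{l'}/\partial \kappa_j$, to the positivity of the partial derivatives $\partial \sigma_{l'}/\partial \kappa_i$ on $\Gamma_{l'}^+$ for $1 \le l' \le k$ (used here with $l' = l+1$ and $\Gamma_k^+ \subseteq \Gamma_{l'}^+$); this is a standard fact about elementary symmetric functions, which I would prove either through Garding's theory of hyperbolic polynomials (for $\kappa \in \Gamma_{l'}^+$ the polynomial $t \mapsto \sigma_{l'}(\kappa + t(1,\ldots,1))$ is real-rooted with all roots negative, which controls the gradient of $\sigma_{l'}$ along the positive orthant) or by an induction on $k$ using the Newton-Maclaurin inequalities $\sigma_{i-1}\sigma_{i+1} \le \sigma_i^2$ applied to the truncated vectors together with the relations above; see \cite{Huisken-Sinestrari} and \cite{Barbosa-Colares}.
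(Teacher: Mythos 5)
Your argument is correct, but note that the paper does not actually prove Proposition~\ref{prop-BC}: it is stated with the citation label as a result quoted from \cite[Proposition 3.2]{Barbosa-Colares}, so there is no in-house proof to compare against. What you have written is essentially the standard derivation, and it is the one that the cited source uses: simultaneously diagonalise $T_l$ and $A$ so that the eigenvalues of $T_l$ are $\Lambda_j = H_l(\kappa|j) = \partial H_{l+1}/\partial\kappa_j$; show via an elliptic point (e.g.\ a point of $\Sigma$ farthest from a fixed origin) and connectedness that $\sigma_k>0$ forces $\kappa$ to lie in the Garding cone $\Gamma_k^+$, exactly as in the paper's Lemma~\ref{lma-sigma-k}; and then use $\Gamma_k^+\subseteq\Gamma_{l+1}^+$ together with positivity of the partials of $\sigma_{l+1}$ on $\Gamma_{l+1}^+$ to conclude each $\Lambda_j>0$. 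The one place you leave a sketch rather than a proof is the final positivity of $\partial\sigma_{l'}/\partial\kappa_i$ on $\Gamma_{l'}^+$; either of the two routes you propose (Garding hyperbolicity, or induction via the Newton--Maclaurin inequalities) is a standard and complete way to close it, so this is not a gap in the sense of a missing idea, only a step you have deferred. One small remark: closed hypersurfaces in $\R^{m+1}$ need not be connected, but the elliptic-point argument applies to each connected component separately, so this does not affect the conclusion.
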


\begin{proof}[Proof of Theorem \ref{thm-ineq-weighted-intro}]
We first assume $ O \notin \Sigma$. In this case, $ r $ is a smooth positive function when restricted to $ \Sigma$.
Choose $ f = r^p  $ in Proposition \ref{prop-generalized-HM}, we have
  \be \label{eq-generalized-HM-rp}
  \begin{split}
    \int_\Sigma   r^p \sigma_{l} d\mu = & \ \int_\Sigma r^p \sigma_{l+1}  \la Y, \nu \ra  d\mu \\
    & \ - \frac{1}{ ( m - l )  {m \choose l  } }\int_\Sigma \langle T_{l}(\nabla r^p), Y^\parallel \rangle d\mu ,
  \end{split}
  \ee
where
\be \label{eq-nabla-rp}
\nabla r^p = p r^{p-1} \nabla r = p r^{p-2} { Y^\parallel}.
\ee
By Proposition \ref{prop-BC},
\be \label{eq-good-T}
\la T_{l} (Y^\parallel ), Y^\parallel \ra \ge 0
\ee
and $\la T_{l} (Y^\parallel ), Y^\parallel \ra =  0$ if and only if $Y^\parallel = 0$.
Thus it follows from \eqref{eq-generalized-HM-rp}, \eqref{eq-nabla-rp}, \eqref{eq-good-T} and the assumption $p \ge 0 $
that
  \be \label{eq-generalized-HM-rp-1}
    \int_\Sigma    \sigma_{l} r^p d\mu \le  \int_\Sigma  \sigma_{l+1} r^{p+1}  d\mu .
  \ee
If $ l + 1 < k $,  applying  \eqref{eq-generalized-HM-rp-1} repeatedly with $(l, p)$ replaced by $(l+1, p+1)$, $\ldots$, $( k - 1, p + k-l-1)$ respectively gives
\bee
  \int_\Sigma  \sigma_{l+1} r^{p+1}  d\mu \le \int_\Sigma  \sigma_{l+2} r^{p+2}  d\mu \le \cdots
 \le \int_\Sigma \sigma_{k} r^{ p + k - l } d \mu .
\eee
This  proves the inequality \eqref{eq-ineq-weighted-intro}.  When  the equality in \eqref{eq-ineq-weighted-intro} holds, the equality in
\eqref{eq-generalized-HM-rp-1} must hold, which
implies $Y^\parallel = 0 $. Hence $\Sigma$ is a round sphere centered at $O$.

Next suppose  $ O \in \Sigma$. Let $ B_\epsilon \subset \Sigma$ be a small geodesic ball with geodesic radius $\epsilon$.
Integrating \eqref{eq-div-2} over $ \Sigma \setminus B_\epsilon$ with $f= r^p$, we have
 \be \label{eq-generalized-HM-singular}
  \begin{split}
    \int_{\Sigma \setminus B_\epsilon}  r^p \sigma_{l} d\mu = & \ \int_{\p B_\epsilon} r^p \la T_{l} (Y^\parallel) , {n} \ra d \tau + \int_{\Sigma \setminus B_\epsilon}  r^p \sigma_{l+1}  \la Y, \nu \ra  d\mu \\
    & \ - \frac{1}{ ( m - l )  {m \choose l  } }\int_{\Sigma \setminus B_\epsilon}  \langle T_{l}(\nabla r^p), Y^\parallel \rangle d\mu ,
  \end{split}
  \ee
where $ n$ is the inward unit normal to $ \p B_\epsilon $ in $\Sigma$ and $d \tau $ is the volume form on $\p B_\epsilon$.
It is clear that
$$ \int_{\p B_\epsilon} r^p \la T_{l} (Y^\parallel) , {n} \ra d \tau \rightarrow 0, \ \mathrm{as} \ \epsilon \rightarrow 0 . $$
Therefore  \eqref{eq-generalized-HM-singular} implies
 \bee \label{eq-generalized-HM-minus-a-point}
  \begin{split}
    \int_{\Sigma }  r^p \sigma_{l} d\mu = & \  \int_{\Sigma }  r^p \sigma_{l+1}  \la Y, \nu \ra  d\mu
     - \frac{1}{ ( m - l )  {m \choose l  } }\int_{\Sigma \setminus \{ O \} }  \langle T_{l}(\nabla r^p), Y^\parallel \rangle d\mu ,
  \end{split}
  \eee
from which the inequality  \eqref{eq-ineq-weighted-intro} and its equality case  follow as in the previous case.
\end{proof}


\begin{thebibliography}{10}

\bibitem{Alencar-Colares}
H. Alencar and A G. Colares,
{\sl Integral formulas for the $r$-mean curvature linearized operator of a hypersurface},
 Ann. Glob. Anal. and Geom., \textbf{16}, no. 3. (1998), 203--220.

\bibitem{Barbosa-Colares}
J.L. Barbosa and A.G. Colares,
{\sl Stability of hypersurfaces with constant $r$-mean curvature},
 Ann. Glob. Anal. and Geom., \textbf{15} (1997), no. 3, 277--297.

\bibitem{Brendle-Hung-Wang} S. Brendle, P.-K. Hung,  and M.-T. Wang,
{\sl  A Minkowski-type inequality for hypersurfaces in the Anti-Desitter-Schwarzschild manifold},
arXiv:1209.0669.

\bibitem{Chen-Yano}
B-Y. Chen and K. Yano,
{\sl  Integral formulas for submanifolds and their applications},
J. Diff. Geom., \textbf{5} (1971) : 467--477.


\bibitem{gerhardt1990flow}
C. Gerhardt,
{\sl Flow of nonconvex hypersurfaces into spheres},
{J. Differential Geom.} \textbf{32}  (1990), no. 1,  299--314.

\bibitem{Guan-Li}
P. Guan and  J. Li,
{\sl The quermassintegral inequalities for k-convex star-shaped domains},
Adv. Math., \textbf{ 221} (2009),  1725--1732.

\bibitem{hsiung1956some}
C.C. Hsiung,
{\sl Some integral formulas for closed hypersurfaces in Riemannian space}, Pac. J. Math.
\textbf{6}  (1956), no. 2,   291--299.

\bibitem{Huisken-Sinestrari}
G. Huisken and C. Sinestrari,
{\sl Convexity estimates for mean curvature flow and singularities of mean convex surfaces},
Acta Math.,  \textbf{183} (1999),  45--70.


\bibitem{kwong2012} K. -K. Kwong,
{\sl An extension of Hsiung-Minkowski formulas and its applications},
arXiv:1307.3025.

\bibitem{KwongMiao}
K. -K. Kwong and P. Miao,
{\sl  A new monotone quantity along the inverse mean curvature flow in $\mathbb R^n$},
  to appear in Pac. J. Math., {arXiv:1212.1906}.

\bibitem{reilly1973variational}
R.C. Reilly,
{\sl Variational properties of functions of the mean curvatures for
  hypersurfaces in space forms},
 J. Differential Geom., \textbf{8} (1973),  465--477.

\bibitem{reilly1973hessian}
R.C. Reilly,
{\sl  On the Hessian of a function and the curvatures of its graph},
Michigan Math. J., 20:373--383, 1973.

\bibitem{reilly1977}
R.C. Reilly,
{\sl On the first eigenvalue of the Laplacian for compact submanifolds of Euclidean space},
 Comment. Math. Helv.,  \textbf{52} (1977), 525--533.


\bibitem{ros1987compact}
 A. Ros,
{\sl Compact Hypersurfaces with Constant Higher Order Mean Curvatures},
 Revista Matem{\'a}tica Iberoamericana,   \textbf{3}  (1987),  no. 3, 447--453.

\bibitem{WS}
W. Str\"{u}bing,
{\sl On integral formulas for submanifolds of spaces of constant curvature and some applications},
Manuscripta Mathematica, \textbf{49} (1984),  no. 2, 177--194.

\bibitem{urbas1990expansion}
J.I.E. Urbas,
{\sl On the expansion of starshaped hypersurfaces by symmetric functions
  of their principal curvatures},   { Math. Z.} \textbf{205} (1990), no. 3, 355--372.




\end{thebibliography}
\end{document}